\def\ra{\rightarrow}
\def \Z {\mathbb Z}
\def\P{\mathbb P}
\def\ep{\varepsilon}
\def\La{\Lambda}
\def\Ph1{P^{(h_1)}}
\def\Ph2{P^{(h_2)}}
\def\b0{{\bf 0}}
\def\bnu{\bar\nu}
\def\1lbnu{{\bnu}_{\lambda_1}}
\def\2lbnu{{\bnu}_{\lambda_2}}
\newtheorem{thm}{Theorem}[section]
\newtheorem{lem}[thm]{Lemma}
\theoremstyle{plain}
\begin{document}

\title{A percolation process on the square lattice where large finite clusters are frozen} 

\author{Jacob van den Berg\footnote{CWI and VU University, Amsterdam; J.van.den.Berg@cwi.nl},
Bernardo N.B. de Lima\footnote{Universidade Federal de Minas Gerais, Belo Horizonte; bnblima@mat.ufmg.br}
and Pierre Nolin\footnote{Courant Institute, NYU, New York; nolin@cims.nyu.edu}  
}

\date{}
\maketitle

\begin{abstract}
In \cite{Al}, Aldous constructed a growth process for the binary tree where clusters freeze as soon as they become
infinite. It was pointed out by Benjamini and Schramm that such a process does not exist for the square lattice.

This motivated us to investigate the modified process on the square lattice, where clusters freeze
as soon as they have diameter larger than or equal to $N$, the parameter of the model. The non-existence result,
mentioned
above, raises the question if the $N-$parameter model shows some `anomalous' behaviour as $N \ra \infty$.
For instance, if one looks at the cluster of a given vertex, does,
as $N \ra \infty$, the probability that it eventually freezes go to 1?
Does this probability go to $0$? More generally, what can be said about the size
of a final cluster? We give a partial answer to some of such questions.

\end{abstract}
{\it Key words and phrases:}  percolation, frozen cluster. \\
{\it AMS 2000 subject classifications.} Primary: 60K35; Secondary: 82B43.

\section{Introduction and statement of the main result}
Let $S$ denote the square lattice. The vertices of this lattice are the elements of $\Z^2$, and each vertex $v$ has an
edge to each of the four vertices $v + (i,j)$, \,\, $|i| + |j| = 1$. Let $E$ denote the set of edges of $S$.
The norm $|v|$ of a vertex $v = (v_1, v_2)$ is defined as $\max(|v_1|, |v_2|)$, and the distance between
two vertices $v$ and $w$ is defined as $|v - w|$.
The diameter of a set $W \subset \Z^2$ is defined as $\sup\{|v-w| \, : \, v, w \in W\}$. By the diameter of
a subgraph $G$ of the square lattice, we mean the diameter of the set of vertices of $G$. 

To each edge $e \in E$ we
assign a value $\tau_e$, where the $\tau_e, e \in E$ are independent random variables, uniformly distributed on
the interval $(0,1)$. 
At time $0$ all edges are closed. If each edge $e$ became open at time $\tau_e$ (and remained open after that time),
the configuration of open and closed edges at time $t$ would simply be a typical configuration for an ordinary percolation
model with parameter $t$. In particular, the open cluster of a given vertex, say $0$, would initially consist of $0$
only, remain finite up to some (random) time $t > 1/2$, and eventually (at time $1$) be the entire lattice.

However, in the process we study, each open cluster `freezes' as soon as it has diameter larger than or equal to $N$, the
parameter of the process. Here `freezes' means that the external edges of the cluster remain closed forever. 

In other words, in this process initially all edges are closed, and an edge $e$ becomes open at time $\tau_e$,
unless at least one endpoint of $e$ already belongs to an open cluster with diameter $\geq N$ (in which case
$e$ remains closed forever).

We are interested in the sizes (diameters) of the final open clusters (i.e. the open clusters at time $1$), 
for large values of the parameter $N$. Note that the rules of the process immediately imply
that the diameter of an open cluster cannot be more than $2 N-1$. (The diameter $2 N -1$ can be obtained
by merging two well-chosen clusters of diameter $N-1$ each).

Motivation comes from a paper by Aldous \cite{Al}, who introduced (as an `interpretation' of ideas concerning
gel formation in \cite{St}) a growth process with similar rules as above, but where
clusters freeze as soon as they become infinite. We will refer to that model as the $\infty$-parameter frozen
model. Aldous made a rigorous construction of such a process for the
binary tree (and proved several interesting properties). However, Benjamini and Schramm ((1999), private communication via 
D. Aldous) showed that such a process does not exist for the square lattice (see the discussion in \cite{BeTo}, Section 3).

It follows from standard arguments that for each finite $N$, the
$N$-parameter frozen percolation model on $S$ (and, more generally, on $\Z^d$) does exist.
(See Sections 4.1 and 4.2 in \cite{Br}, where also some
exact computations for $d = 1$ are shown).
It is natural to ask if
the above mentioned non-existence result for the $\infty$-parameter model on $S$ is,
in some sense, reflected in the asymptotic behaviour of the $N$-parameter system as $N \ra \infty$.
In particular, the following questions arise, where we use the notation $C^{(N)}$ for the open cluster of the origin at time
$1$ in the $N$-parameter model, and where a cluster is called a {\em giant cluster} if its diameter is at least $N$.

\begin{itemize}
\item (1.) Do, eventually, the giant clusters cover the entire lattice? More precisely, 
$$ \text{ Does } \P\left(C^{(N)} \text{ has diameter } \geq N\right) \ra 1, \,\, \text{ as } N \ra \infty \, \text{?} $$

\item (2.) Do, eventually, the giant clusters cover a neglible portion of the lattice? More precisely,
$$ \text{ Does } \P\left(C^{(N)} \text{ has diameter } \geq N\right) \ra 0, \,\, \text{ as } N \ra \infty \, \text{?} $$

\item (3.) If the answer to question (1) is negative, what can be said, for large $N$, about the diameters
of the non-giant clusters?
\end{itemize}

Note that if the final cluster of $0$ has diameter $k$, there is a vertex at distance $\leq k+1$ from $0$ which
belongs to a giant cluster. Hence, if the answer to question (2) is positive, then, for every $k$, the probability that
$C^{(N)}$ has diameter $k$ goes to $0$ as $N \ra \infty$. 

Theorem \ref{mainthm}, below, gives a negative answer to question (1) and a partial answer
to question (3).
The proof is given in the next section.

\begin{thm}\label{mainthm}
Let, as before, $C^{(N)}$ denote the open cluster of the origin at time $1$ for the $N$-parameter frozen percolation model on the
square lattice. \\
For all $0 < a < b < 1$,
$$\liminf_{N \rightarrow \infty} P\left(C^{(N)} \mbox{ has diameter } \in (a N, b N)\right) > 0.$$
\end{thm}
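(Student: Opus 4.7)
Fix $0<a<b<1$ and choose $\beta\in(a/2,b/2)$ and $\gamma\in(\beta,b/2)$. Let $D=[-\beta N,\beta N]^{2}$ and $A=[-\gamma N,\gamma N]^{2}\setminus D$. The key geometric fact is that any set containing $D$ and contained in $[-\gamma N,\gamma N]^{2}$ has diameter strictly between $aN$ and $bN$, because $2\beta N>aN$ and $2\gamma N<bN$.

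The plan is to construct an event $\calE$, measurable with respect to $(\tau_e)_{e\in E}$, with $\P(\calE)$ bounded below by a positive constant independent of $N$, such that on $\calE$ the following holds in the frozen process: the union $S$ of all clusters that have frozen by time $1$ contains a circuit around $0$ lying inside $A$, and $S\cap D=\emptyset$. On $\calE$, every edge with both endpoints in $D$ is non-censored (both endpoints are never frozen) and therefore opens by time $1$, so $D$, being connected and containing $0$, lies in the cluster of $0$ (giving diameter $\geq 2\beta N>aN$). Conversely, any edge leaving $D$ has its outer endpoint either in $S$ (edge censored) or connected to $D$ only within $\Z^{2}\setminus S$; in either case the cluster of $0$ coincides with the connected component of $0$ in $\Z^{2}\setminus S$, which is sealed by the circuit in $S$ and hence contained in $[-\gamma N,\gamma N]^{2}$ (giving diameter $\leq 2\gamma N<bN$). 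Thus $\text{diam}\,C^{(N)}\in(aN,bN)$ on $\calE$.

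To show $\P(\calE)\geq c>0$, one works in the coupled ordinary Bernoulli percolation at a near-critical time $t^{\ast}$, chosen so that (by standard near-critical/RSW-type estimates on scale $N$) crossings of $N$-scale rectangles by clusters of diameter $\geq N$ have probability bounded away from $0$. Inside the enlarged annulus $[-(\gamma+1)N,(\gamma+1)N]^{2}\setminus D$, one finds with uniformly positive probability four clusters, each a long crossing of one of the four sub-rectangles (North, South, East, West) of this enlarged annulus, and each of diameter $\geq N$; a further RSW gluing arranges that their union covers a circuit in $A$ around $0$. To ensure these clusters lie entirely in $\Z^{2}\setminus D$, one uses the fact that in the frozen process, once a cluster reaches diameter $N$ it immediately freezes and stops growing: with positive probability each of the circuit clusters freezes at a moment when its whole extent is still contained in $\Z^{2}\setminus D$. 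Combined with a monotone coupling between the frozen process and ordinary percolation, this delivers $\calE$.

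The main obstacle is making precise the outward-extension requirement: one must prove that with uniformly positive probability, the four clusters forming the circuit do not intrude into $D$ in the frozen process. This is the delicate step, handled by a careful near-critical arm-exponent argument exploiting that freezing is a monotone stopping rule on cluster diameter, so that any intrusion into $D$ would force the existence, before freezing, of both an ``outward'' crossing of the enlarged annulus and an ``inward'' arm into $D$, a configuration that can be ruled out with positive probability via a three-arm-style separation argument in the annulus $A$. A secondary technical point is the coupling between the ordinary and the frozen process: the set of open edges of the frozen process is always a subset of that of ordinary percolation, and the two coincide on all edges whose endpoints have never belonged to a frozen cluster, which suffices to transfer the ordinary-percolation crossing events of the previous paragraph into the frozen model.
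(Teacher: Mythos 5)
Your high-level strategy — surround the origin with clusters that freeze at diameter $\geq N$ while the origin's cluster stays confined at an intermediate scale, and note that a surrounding structure inside a box of side $<N$ must protrude outward to reach the freezing diameter — is indeed the strategy of the paper. But the two steps you yourself flag as delicate are precisely where the real content lies, and your proposal does not supply it. First, the \emph{timing} mechanism is missing. It is not enough that the blocking clusters eventually avoid $D$: you must guarantee that they freeze \emph{before} the growing cluster of the origin connects to them (otherwise the origin is absorbed and $C^{(N)}$ has diameter $\geq N$), and simultaneously that they do not freeze \emph{too early}, i.e.\ before the surrounding structure is actually in place. The paper achieves this with a carefully tuned subcritical time $\tau<1/2$, defined by requiring the $\tau$-open crossing probability of a small rectangle $R'$ to equal half the $1/2$-open crossing probability of the protruding arm $R$: condition (vi) (no $\tau$-open crossing of $R'$) guarantees nothing freezes before time $\tau$, condition (v) guarantees the circuit's cluster exceeds diameter $N$ and freezes by time $1/2$, and condition (ii) — a $1/2$-closed dual circuit in $B(cN)\setminus B(aN)$ — keeps the origin's cluster disconnected from the circuit until after it has frozen. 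Your ``three-arm-style separation argument'' does not identify the need for this closed dual circuit around the origin at the freezing time, and ``freezes at a moment when its whole extent is still in $\Z^2\setminus D$'' is asserted rather than arranged by any concrete event on the $\tau_e$'s.

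Second, your reduction to ordinary percolation is not sound as stated. Frozen percolation is not monotone in the $\tau_e$'s, and the statement that the two processes ``coincide on edges whose endpoints have never belonged to a frozen cluster'' is circular: which vertices freeze depends on the global configuration, including clusters invading from outside your enlarged annulus. The paper resolves this by making the core of the argument a purely \emph{deterministic} lemma: conditions (i)--(vi) on the $\tau_e$ configuration (including the isolating $1/2$-closed dual circuit $\pi$ around the enlarged region $\Lambda'$, which shields the construction from external frozen clusters up to time $1/2$) force the stated behaviour of the frozen dynamics, and only then are the conditions shown to hold jointly with uniformly positive probability via RSW and a conditioning on the extremal circuits. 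A smaller but symptomatic issue: four separately frozen clusters cannot form a single open circuit (their mutual boundary edges are censored), so ``their union covers a circuit'' must be replaced either by a vertex-blocking-set argument or, as in the paper, by a single open circuit that freezes as one cluster via a protruding arm. As it stands, the proposal correctly locates the difficulties but does not overcome them, so there is a genuine gap.
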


\section{Proof of Theorem \ref{mainthm}}

Deviating somewhat from the standard percolation notation, we define
(for a positive even integer $k$) $B(k)$ as the box $[-k/2, k/2]^2$ in the square lattice.

Let, as before, $N$ denote the parameter in the frozen percolation process.

Let $0< a < b < 1$ be given, and choose $c \in (a,b)$. Next, take $l$ such that 

\begin{equation}\label{eqi}
l + (b-c)/2 < 1 < l + (b+c)/2.
\end{equation}

By the first inequality in \eqref{eqi} we can choose $0 < \varepsilon < 1$ so small that also

\begin{equation}\label{eqii}
l + (b-c)/2 + \varepsilon < 1.
\end{equation}

(Later we possibly make $\varepsilon$ even smaller to satisfy additional conditions).

\begin{figure}
\begin{center}

\subfigure[The different annuli and rectangles used.]{\includegraphics[width=\textwidth]{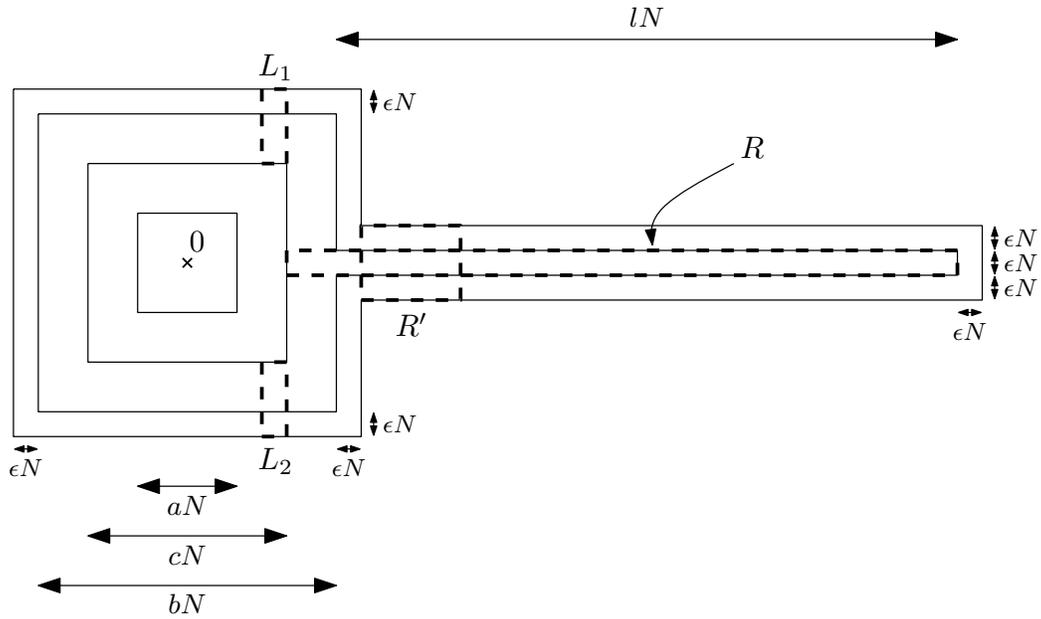}}
\subfigure[Open and closed paths involved in the proof.]{\includegraphics[width=\textwidth]{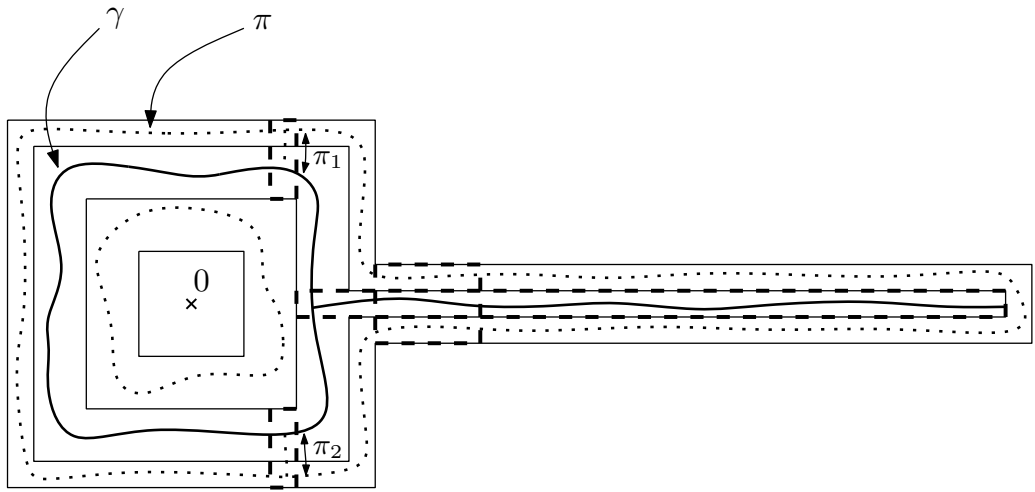}}

\caption{\label{fig:main}Construction used for the proof of Theorem \ref{mainthm}. The open paths are represented with solid
lines, and the closed paths with dotted lines.}
\end{center}
\end{figure}

Let $R$ be the rectangular box of length $(l + (b-c)/2)N$ and width $\ep N$ of which the west side is a central subsegment
of the east side of $B(c N)$. Let $\La$ be the union of $B(b N)$ and $R$. So $\La$ is a $b N \times b N$ square
from which a $l N \times \ep N$ rectangle sticks out to the right (see Figure \ref{fig:main}).
Further, let $\La'$ be the set of all points at distance $\leq \ep N$ from $\La$.
So $\La'$ is the disjoint union of $B((b + 2 \ep)N)$ and a rectangle of width $3 \ep N$ and
length $l N$. Let $R'$ be the leftmost part of length $4 \ep N$ of that rectangle.

Finally, let $L_1$ (respectively, $L_2$) be the rectangle of which the south (resp. north) side
is the rightmost segment of length
$\ep N$ of the north (resp. south) side of $B(c N)$, and the north (resp. south) side
is a part of the boundary of $\La'$.

The first part of the proof of the theorem is a deterministic Lemma.
Call an edge $e$ $t$-open if $\tau_e < t$. A path, or more generally a set of edges,
is called $t$-open if every edge of that
set is $t$-open. The terminology $t$-closed is defined in a completely similar way. 

\begin{lem}\label{lem1}
Suppose that there is a $\tau \in (0, 1/2)$
such that each of the following, (i) - (vi) below, holds:

\begin{itemize}
\item (i) $\exists \tau$-open circuit $\gamma$ in the annulus $B(b N) \setminus B(c N)$.
\item (ii) $\exists \frac{1}{2}$-closed dual circuit in the annulus $B(c N) \setminus B(a N)$.
\item (iii) $\exists$ a $\frac{1}{2}$-closed dual circuit $\pi$ in the annulus $\Lambda' \setminus
\Lambda$.
\item (iv) $\exists$ $\frac{1}{2}$-closed dual paths $\pi_1$ and $\pi_2$ in $L_1$, respectively $L_2$,
`connecting' $\gamma$ and $\pi$.
\item (v) $\exists$ a $\frac{1}{2}$-open path in $R$ from the right side of $R$ to $\gamma$.
\item (vi) There is no $\tau$-open horizontal crossing of the area of $R'$ bounded by the two segments
of $\pi$.
\end{itemize}

Then the final cluster of $0$ in the $N$-parameter frozen percolation process has diameter $\in [a N, b N]$.
\end{lem}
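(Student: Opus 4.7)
The central claim I would prove is that at some deterministic time $t_* \in (\tau, 1/2]$, the cluster containing $\gamma$ in the frozen process first reaches diameter $\geq N$ and freezes; after this, $\gamma$ acts as a permanent barrier around the origin. At time $\tau$ all of the dual structures in (ii)--(iv) are $\frac{1}{2}$-closed and hence $\tau$-closed, so they bound the cluster of $\gamma$: by (iii) it lies inside $\pi$ (hence inside $\Lambda'$), and (iv) together with appropriate arcs of $\gamma$ and $\pi$ further confines it through $L_1, L_2$. Moreover (vi) forbids any $\tau$-open path from crossing the bottleneck in $R'$ horizontally, so at time $\tau$ the cluster cannot extend east past $R'$. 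Taking $\ep$ small enough (a constraint allowed by the remark following \eqref{eqii}), this forces the cluster's diameter at time $\tau$ to be strictly less than $N$, so no freezing has yet occurred. As time increases past $\tau$ the remaining edges of the $\frac{1}{2}$-open path in $R$ from (v) open one by one; the instant that path is completed, the cluster acquires horizontal span at least $lN+(b+c)N/2$, which is strictly larger than $N$ by \eqref{eqi}. Hence the cluster first attains diameter $\geq N$ at some $t_*\in(\tau,1/2]$ and freezes at that moment.

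Once the cluster of $\gamma$ has frozen, every external edge of it --- in particular every edge from a vertex of $\gamma$ into the region strictly enclosed by $\gamma$ --- stays closed forever. Since $\gamma$ surrounds $B(cN)$ and hence the origin, the final cluster of $0$ is trapped inside this enclosed region, which is contained in $B(bN)$; this yields the upper bound. For the lower bound, the dual circuit in (ii) forces every cluster meeting $B(aN)$ to be contained in $B(cN)$ up to time $1/2$, so such a cluster cannot reach diameter $\geq N$ before $t_*$; after $t_*$ any cluster strictly inside $\gamma$ still has diameter at most $bN<N$ and so cannot freeze either. Therefore no frozen cluster ever contains a vertex of $B(aN)$, every edge with both endpoints in $B(aN)$ eventually opens at its time $\tau_e$, and since the subgraph induced on $B(aN)$ is connected, the final cluster of $0$ contains all of $B(aN)$ and hence has diameter at least $aN$.

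The main obstacle is the first step. I must verify, \emph{deterministically}, that the confinement of the would-be cluster of $\gamma$ by the dual arcs from (iii)--(iv) and the barrier from (vi) is maintained for every $t \leq t_*$, not only at $\tau$ itself; and that no rival cluster (for instance one growing in $\Lambda'\setminus(B(bN)\cup R')$ east of the bottleneck) attains diameter $\geq N$ before $\gamma$'s does. The specific geometric setup of $\Lambda,\Lambda',R,R',L_1,L_2$ is designed precisely so that the plane is partitioned by $\gamma$ together with the dual arcs in a way that makes this bookkeeping tractable, so this step should reduce to a careful case analysis of how the various dual closed paths separate the possible emerging clusters.
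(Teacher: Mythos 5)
Your plan follows the same route as the paper's proof: confinement by the closed dual circuits, freezing of $\gamma$'s cluster before time $1/2$, the frozen circuit acting as a permanent barrier around the origin, and the lower bound via (ii). The second half of your argument (upper bound from the frozen barrier, lower bound from the fact that nothing inside the circuit of (ii) ever joins a frozen cluster, so all of $B(aN)$ eventually opens) is essentially the paper's. The problem is that the step you defer as ``the main obstacle'' \emph{is} the content of the lemma, and as you have framed it it is both incomplete and pointed at the wrong target. To break the circularity you correctly identify, you must rule out freezing of \emph{every} cluster meeting the interior of $\pi$ before time $\tau$, not only the cluster of $\gamma$. The mechanism is: since $\pi$ is $1/2$-closed, the frozen process inside $\pi$ up to time $1/2$ is determined by the edges inside $\pi$ alone; letting $t_0$ be the first time anything inside $\pi$ freezes, the process there coincides with ordinary percolation at parameter $t$ for all $t<t_0$; and for $t\le\tau$, condition (vi) together with $\pi$ splits the interior of $\pi$ into exactly two pieces --- roughly $B((b+2\ep)N)$ plus the start of the corridor up to $R'$, and the remainder of the corridor --- whose diameters are at most about $(b+6\ep)N$ and $lN$ plus lower-order terms, both $<N$ once $\ep$ is taken small enough (this is what the paper's remark after \eqref{eqii} about shrinking $\ep$ is for). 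Hence $t_0>\tau$, and in particular the whole of $\gamma$ is genuinely open in the frozen process at time $\tau$.

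Second, the confinement you propose to ``maintain for every $t\le t_*$'' cannot and need not be maintained: (vi) provides only a $\tau$-closed barrier, and the cluster of $\gamma$ is \emph{supposed} to escape past $R'$ after time $\tau$ in order to freeze. What replaces it on $[\tau,1/2)$ is that the now fully open circuit $\gamma$, together with the $1/2$-closed $\pi$, $\pi_1$, $\pi_2$, confines every cluster \emph{not} containing $\gamma$ to one of three regions --- the interior of $\gamma$; the part of the region between $\gamma$ and $\pi$ west of $L_1\cup L_2$; and the part east of it including the corridor --- each of diameter $<N$ by \eqref{eqii}, $b<1$, and $\ep$ small. This disposes of all rival clusters at once. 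Combined with the observation that (i) and (v) produce a $1/2$-open component inside $\pi$ of diameter at least $(l+(b+c)/2)N>N$ by \eqref{eqi}, so that \emph{something} inside $\pi$ must freeze strictly before $1/2$, the unique candidate is the cluster of $\gamma$. (Your identification of $t_*$ as the instant the path of (v) is completed is also not quite right --- the cluster may reach diameter $N$ earlier through the corridor --- but that is harmless, since only the existence of some freezing time in $(\tau,1/2)$ is needed.)
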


\begin{proof}
What are the consequences of (i) - (vi) for the $N$-parameter frozen percolation process?
Note
that before time $\tau$ nothing in the interior of $\pi$ is frozen yet, simply because, roughly
speaking, by (vi) this region is split in two subregions that are too small to freeze. Consequently,
every edge $e$ in the interior of $\pi$ with $\tau_e < \tau$ is indeed open at time $\tau$ in the frozen
process. 
 In particular the circuit $\gamma$ (see (i) above) is open at time $\tau$. Because of this, and by (iv), there can be no 
cluster in the interior of $\pi$ that does not contain $\gamma$ and freezes before time $1/2$. On the other hand, something
in the interior of $\pi$ does freeze before time $1/2$ because, by (i) and (v), there is a $1/2$ open connected
component in the interior of $\pi$ with diameter $\geq (l + (b-c)/2 + c) N$, which by \eqref{eqi} is larger than $N$.
Hence the open cluster containing $\gamma$ is frozen before time $1/2$. Further, because of this and by (ii),
it follows that at time $1/2$ the open cluster of $0$ is not frozen but is surrounded by a circuit of diameter
$< b N$ which is part of a frozen cluster. Hence the diameter of the open cluster of $0$ will remain smaller than $b N$ forever.
On the other hand, every edge in the interior of the $1/2$-closed circuit in (ii) will eventually become open.
Hence the diameter of the final open cluster of $0$ is between $a N$ and $b N$.
This completes the proof of Lemma \ref{lem1}.

\end{proof}

To complete the proof of Theorem \ref{mainthm}, we will show that there exists a $\tau \in (0,1/2)$ for which
the probability that there is a circuit $\gamma$ in the annulus $B(b N) \setminus B(c N)$ and a dual circuit
$\pi$ in the annulus $\Lambda' \setminus \Lambda$ such that each of the events (i) - (vi) in Lemma \ref{lem1} holds
is bounded away from $0$ as
$N \rightarrow \infty$. 
From now on, $E_{(i)}$, $E_{(ii)}$  and $E_{(iii)}$ will denote the event described in (i), respectively (ii) and (iii),
in Lemma \ref{lem1}. Moreover, for a given (deterministic) circuits $\gamma$ in the annulus $B(b N) \setminus B(c N)$
and a given dual circuit $\pi$ in the annulus $\Lambda' \setminus \Lambda$, we define the following events: 

\smallskip\noindent
$E_{(i)}(\gamma)$ is the
event that $\gamma$ is the narrowest $\tau$-open circuit in $B(b N) \setminus B(c N)$; $E_{(iii)}(\pi)$ 
is the event that $\pi$ is the widest $1/2$-closed dual circuit in $\Lambda' \setminus \Lambda$;
$E_{(iv)}(\gamma,\pi)$ is the event that (iv) holds; 
$E_{(v)}(\gamma)$ is the event that (v) holds; 
$E_{(vi)}(\pi)$ is the event that (vi) holds. 

\smallskip\noindent
Further, $C_{(i)}$, $C_{(ii)}$ etc. will denote
strictly positive constants that may depend on $a$, $b$, $c$, and $\ep$, but not on $N$.

Now, let $\alpha$ ($= \alpha(N)$) denote the probability that there is
a $1/2$-open horizontal crossing of the rectangle $R$. Note that, by the well-known RSW results in
percolation (see e.g. \cite{Gr}, Section 11.7), $\alpha(N)$ is bounded away from
$0$ as $N \rightarrow \infty$. Let $\tau$ ($= \tau(N)$) be such that 

\begin{equation}\label{tau-def}
P(\exists \tau \mbox{-open horizontal crossing of } R') = \alpha/2.
\end{equation}
Note that for all sufficiently large $N$ such a $\tau$ exists, is unique and smaller than $1/2$, because the probability that
there is a $t$-open horizontal crossing of $R'$ is
obviously continuous and strictly increasing in $t$, is $0$ for $t = 0$, and at least $\alpha$ for $t = 1/2$.

Now, again by RSW, there is a strictly increasing function $f : [0,1] \rightarrow [0,1]$ (which depends only
on $b$, $c$, and $\ep$, but not on $N$ or $t$) such that $f(0) = 0$, $f(1) = 1$ and
\begin{align*} 
P( \exists t \mbox{-open circuit in } & B(b N) \setminus B(c N))\\
& \geq f\left(P(\exists t \mbox{-open horizontal crossing of } R')\right).
\end{align*}

By this, the definition of $E_{(i)}$,  and \eqref{tau-def},
there is a positive constant $C_{(i)}$ such that for all sufficiently large $N$ 
\begin{equation} \label{boundi}
P(E_{(i)}) \geq C_{(i)}.
\end{equation}

Also by RSW, there is a positive constant $C_{(iii)}$ such that for all sufficiently large $N$

\begin{equation} \label{boundiii}
P(E_{(iii)}) \geq C_{(iii)}.
\end{equation}

Now, for given $\gamma$ and $\pi$, we condition on the event $E_{(i)}(\gamma)\cap E_{(iii)}(\pi)$ defined above.

Note that the event $E_{(ii)}$ is independent of this event and (by RSW) has,
for all sufficiently large $N$, 
probability larger than some positive constant $C_{(ii)}$.
Also note that, again by RSW, the conditional probability of $E_{(iv)}(\gamma, \pi)$ 
is (for all sufficiently large $N$) bounded from below by some positive constant $C_{(iv)}$.
Further, $E_{(v)}(\gamma)$ is independent of the event we condition on, and clearly (by the definition of $\alpha$),
has conditional probability $\geq \alpha$.
It is also clear that $E_{(vi)}(\pi)$ is independent of the event we condition on, and that (using the choice of $\tau$),
for all sufficiently large $N$,
its complement has probability at most $\alpha/2$. 
Finally, it is easy to see that the events $E_{(ii)}$, $E_{(iv)}(\gamma, \pi)$, and $E_{(v)}(\gamma) \cap E_{(vi)}(\pi)$
are conditionally independent.

Combining the above facts with Lemma \ref{lem1} we get that the probability that
the final cluster of $0$ in the $N$-parameter frozen percolation process has diameter $\in [a N, b N]$
is larger than or equal to

\begin{equation}\label{finsum}
\sum_{\gamma, \pi} P(E_{(i)}(\gamma)\cap E_{(iii)}(\pi)) \, C_{(ii)} \, C_{(iv)} P(E_{(v)}(\gamma) \cap E_{(vi)}(\pi)).
\end{equation}

Since, by the choice of $\alpha$ and by \eqref{tau-def}, 
$$P(E_{(v)}(\gamma) \cap E_{(vi)}(\pi)) \geq P(E_{(v)}(\gamma)) - P\left(E_{(vi)}^c(\pi)\right) \geq
\alpha - \alpha/2 = \alpha/2,$$
the summation \eqref{finsum} is larger than or equal to

$$\frac{\alpha}{2} \, C_{(ii)} \, C_{(iv)} \sum_{\gamma, \pi} P\left(E_{(i)}(\gamma)\cap E_{(iii)}(\pi)\right),$$
which by \eqref{boundi} and \eqref{boundiii} is larger than or equal to 
$$\frac{\alpha}{2} C_{(ii)} \, C_{(iv)} \, C_{(i)} C_{(iii)}.$$
This completes the proof of Theorem \ref{mainthm}. $\square$

\section*{Acknowledgments}
B.N.B.L. is partially supported by CNPq
and FAPEMIG (Programa Pesquisador Mineiro). P.N.'s research was supported in part by the NSF grant OISE-07-30136.
B.N.B.L. and P.N. would also like to thank CWI for its hospitality during multiple visits.

\end{document}